\newcommand{\mA}{\mathcal{A}}
\newcommand{\re}{\mathbb{R}}
\newcommand{\N}{\mathbb{N}}
\newcommand{\lmd}{\lambda}
\newcommand{\Dt}{\Delta}
\def\af{\alpha}
\def\bt{\beta}
\newcommand{\mt}[1]{\mathtt{#1}}
\newcommand{\reff}[1]{(\ref{#1})}
\newcommand{\mc}[1]{\mathcal{#1}}
\definecolor{red}{rgb}{1,0,0}
\newcommand{\bdes}{\begin{description}}
\newcommand{\edes}{\end{description}}
\newcommand{\bal}{\begin{align}}
\newcommand{\eal}{\end{align}}
\newcommand{\bnum}{\begin{enumerate}}
\newcommand{\enum}{\end{enumerate}}
\newcommand{\bit}{\begin{itemize}}
\newcommand{\eit}{\end{itemize}}
\newcommand{\bea}{\begin{eqnarray}}
\newcommand{\eea}{\end{eqnarray}}
\newcommand{\be}{\begin{equation}}
\newcommand{\ee}{\end{equation}}
\newcommand{\baray}{\begin{array}}
\newcommand{\earay}{\end{array}}
\newcommand{\bsry}{\begin{subarray}}
\newcommand{\esry}{\end{subarray}}
\newcommand{\bca}{\begin{cases}}
\newcommand{\eca}{\end{cases}}
\newcommand{\bcen}{\begin{center}}
\newcommand{\ecen}{\end{center}}
\newcommand{\bbm}{\begin{bmatrix}}
\newcommand{\ebm}{\end{bmatrix}}
\newcommand{\bmx}{\begin{matrix}}
\newcommand{\emx}{\end{matrix}}
\newcommand{\bpm}{\begin{pmatrix}}
\newcommand{\epm}{\end{pmatrix}}
\newcommand{\btab}{\begin{tabular}}
\newcommand{\etab}{\end{tabular}}
\newtheorem{theorem}{Theorem}[section]
\theoremstyle{definition}
\newtheorem{example}[theorem]{Example}
\newtheorem{alg}[theorem]{Algorithm}
\newtheorem{remark}[theorem]{Remark}
\numberwithin{equation}{section}
\begin{document}

\title[Detecting Copositive Matrices and Tensors]
{A Complete Semidefinite Algorithm for Detecting Copositive Matrices and Tensors}

\author{Jiawang Nie}
\address{Department of Mathematics,
University of California San Diego,
9500 Gilman Drive, La Jolla, CA, USA, 92093.}
\email{njw@math.ucsd.edu}

\author{Zi Yang}
\email{ziy109@ucsd.edu}

\author{Xinzhen Zhang}
\address{School of Mathematics, Tianjin University,
Tianjin 300072, China.}
\email{xzzhang@tju.edu.cn}

\subjclass[2010]{15A69,15B48,90C22}

\date{}

\keywords{copositivity, matrix, tensor,
semidefinite relaxation}

\begin{abstract}
A real symmetric matrix (resp., tensor) is said to be
copositive if the associated quadratic (resp., homogeneous) form
is greater than or equal to zero over the nonnegative orthant.
The problem of detecting their copositivity is NP-hard.
This paper proposes a complete semidefinite relaxation algorithm
for detecting the copositivity of a matrix or tensor.
If it is copositive, the algorithm can get a certificate for the copositivity.
If it is not, the algorithm can get a point that refutes the copositivity.
We show that the detection can be done by
solving a finite number of semidefinite relaxations,
for all matrices and tensors.
\end{abstract}

\maketitle

\section{Introduction}
\label{sec:introdution}

\subsection{Copositive matrices}

A real symmetric matrix $A \in \re^{n \times n}$
is said to be {\it copositive} if
\[
x^TA x \geq 0 \quad \forall \, x \in \re_+^n,
\]
where $\re_+^n$ is the nonnegative orthant
(i.e., the set of nonnegative vectors).
If $x^TA x > 0$ for all $0 \ne x \in \re_+^n$,
then $A$ is said to be {\it strictly copositive}.
The set of all $n\times n$ copositive matrices
is a cone in $\re^{n \times n}$,
which is denoted as $\mc{COP}_n$.
Copositive matrices were introduced in \cite{Motzkin1818Copositive}.
They have broad applications, e.g.,
in quadratic programming \cite{Burer2009copositive},
dynamical systems and control theory \cite{Jacobson2000Extensions,MasSho07},
graph theory \cite{DeKlerk2002Approximation,Dukanovic2010Copositive},
complementarity problems \cite{Facchinei2007Finite}.
We refer to \cite{Bomz12,Dur10}
for surveys on copositive optimization.

A basic problem in optimization
is the detection of copositive matrices.
Let $\mc{S}_+^n$ be the cone of $n\times n$
real symmetric positive semidefinite (psd) matrices,
and $\mc{N}_+^n$ be the cone of
$n\times n$ real symmetric matrices whose entries are all nonnegative.
Clearly, it holds that
\be
\mc{S}_+^n + \mc{N}_+^n  \, \subseteq \, \mc{COP}_n.
\ee
For $n \leq 4$, the above inclusion is an equality;
for $n \geq 5$, the equality does not hold any more \cite{Dia62}.
For instance, the Horn matrix \cite{HalNew63}
is copositive, but it is not a sum of
psd and nonnegative matrices.
Checking membership of the cone $\mc{COP}_n$
is NP-hard~\cite{DicGij14}.
As shown in \cite{Kaplan00}, a matrix $A$ is copositive
if and only if it does not have a principal submatrix
that has a negative eigenvalue with a positive eigenvector.
To apply this testing, one needs to check eigenvalues for all principal submatrices,
which grows exponentially in the dimension.
For the case $n=5$, when the diagonal entries
are all ones, $A$ is copositive if and only if
the polynomial $\|x\|^2(\sum_{i,j=1}^5 A_{ij} x_i^2x_j^2)$
is a sum of squares \cite{DDGH13}.
When off-diagonal entries are nonpositive,
$A$ is copositive if and only if $A$ is positive semidefinite \cite{HirSee10}.
When a matrix is tridiagonal or acyclic,
its copositivity can be detected in linear time
\cite{Bom00,ikramov2002linear}.
For testing copositivity for general matrices,
there exist methods based on simplicial partition. We refer to
\cite{Bundfuss2008Algorithmic,sponsel2012improved}
and the references  therein.
Another approach for testing copositivity is to use
the difference of convexity \cite{Bomze2013Copositivity,DurHU13}.
A survey about existing results and open problems
for copositive matrices can be found in \cite{BerDurSha15}.

\subsection{Copositive tensors}

Matrices can be viewed as tensors of order $2$.
The concept of copositivity can be naturally generalized to tensors,
as in Qi~\cite{Qi2013Symmetric}.
A tensor $\mA$ is a multi-dimensional array
\[
\mA \, := \, (\mA_{i_1 \ldots i_m}),
\]
with indices $1 \leq i_1, \ldots, i_m \leq n$.
The number $m$ is the order.
Such $\mA$ is called an $n$-dimensional tensor of order $m$.
In some applications, we often have symmetric tensors.
The tensor $\mathcal{A}$ is {\it symmetric} if
$\mA_{i_1 i_2 \ldots i_m} = \mA_{j_1 j_2 \ldots j_m}$
whenever $(i_1,i_2,\ldots,i_m)$ is a permutation of $(j_1,j_2,\ldots,j_m)$.
We denote by $\mt{S}^m(\re^n)$ the space of
symmetric tensors of order $m$ over the vector space $\re^n$.
For $\mA \in \mt{S}^m(\re^n)$, define the homogeneous polynomial
\be \label{df:A(x)}
\mA (x) \, := \, \sum_{1\le i_1,i_2,\cdots,i_m \le n}
\mA_{i_1 i_2 \cdots i_m} x_{i_1} x_{i_2} \cdots x_{i_m}.
\ee
Clearly, $\mA(x)$ is a homogeneous polynomial
(i.e., a form) of degree $m$ in the variable $x:=(x_1,\ldots, x_n)$.
If $\mA(x) \geq 0$ for all $x \in \re^n$,
$\mA$ is said to be {\it positive semidefinite} (psd).
If $\mA(x) \geq 0$ for all $x \in \re_+^n$,
$\mA$ is said to be {\it copositive}.
Similarly, if $\mA(x) > 0$ for all $0 \ne x \in \re_+^n$,
$\mA$ is said to be {\it strictly} copositive.
Denote by $\mc{COP}_{m,n}$ the cone of all copositive tensors in $\mt{S}^m(\re^n)$.
Clearly, when the order $m=2$, positive semidefinite
(resp., copositive) tensors are the same as
positive semidefinite (resp., copositive) matrices.
Like the matrix case, copositive tensors also have wide applications,
e.g., in complementary problems
\cite{Che2016Positive,song2014properties,Song2016Tensor},
physics and hypergraphs \cite{Chen2016Copositive,kannike2016vacuum}.
We refer to \cite{CHQ17,Qi2013Symmetric,Song2014Necessary,song2014properties}
for more applications and properties of copositive tensors.

Detecting copositivity of symmetric tensors is also
a mathematically challenging question.
This problem is also NP-hard, since it includes
testing matrix copositivity as a special case.
If the off-diagonal entries of a symmetric tensor $\mA$ are nonpositive,
then $\mA$ is copositive if and only if $\mA$
is positive semidefinite \cite{Qi2013Symmetric}.
There also exists a characterization of copositive tensors
by the eigenpairs of its principal subtensors \cite{Song2014Necessary}.
Like the matrix case, the copositivity of a tensor
can be tested by algorithms based on simplicial partition.
We refer to \cite{Chen2016Copositive,CHQ17}
and the references therein.

\subsection{Contributions}

In the prior existing methods for detecting copositivity,
most of them can detect the copositivity
if the matrix or tensor lies in the interior of the copsitive cone
$\mc{COP}_n$ or $\mc{COP}_{m,n}$,
or if it lies outside the copositive cone.
If the matrix or tensor lies on the boundary,
then these methods typically have difficulty
in detecting the copositivity.
If it is close to the boundary,
they are often very expensive for doing the detection.

In this paper, we propose a new algorithm for detecting copositivity.
It is based on Lasserre type semidefinite relaxations
and optimality conditions of polynomial optimization.
To be precisely, we construct a hierarchy of semidefinite
relaxations for checking copositivity.
The construction uses semidefinite relaxation techniques
that are developed in the recent work~\cite{Nie2017Tight}.
If a tensor/matrix $\mA$ is copositive,
we can get a certificate for the copositivity.
If it is not copositive, we can compute a point
$u \in \re_+^n$ such that $\mA(u) <0$.
Such a point $u$ refutes the copositivity of $\mA$.
No matter a matrix is copositive or not,
the testing of copositivity can be done by the algorithm
in finitely many steps.
Even if the matrix or tensor lies on the boundary of the copositive cone,
the algorithm also terminates in finitely many steps.
In other words, for every matrix and tensor,
its copositivity can be detected
by solving a finite number of semidefinite relaxations.
This is why we call it a {\it complete semidefinte}
algorithm for detecting copositivity.
To the best of the authors' knowledge,
this is the first semidefinite relaxation algorithm that can detect copositivity
and that can terminate in finitely many steps
for all matrices and tensors.

The paper is organized as follows.
Section~2 reviews some preliminaries in polynomial optimization.
Section~3 gives the complete semidefinite algorithm.
Section~4 presents numerical experiments of the algorithm.
Section~5 draws some conclusions and makes discussions.

\section{Preliminaries}

The symbol $\mathbb N$ stands
for the set of nonnegative integers, and $\mathbb R$
for the real field.
For $x :=(x_1, \ldots, x_n) \in {\mathbb R}^n$
and $\af := (\af_1, \ldots, \af_n) \in \N^n$, denote
\[
x^\alpha := x_1^{\alpha_1}\cdots x_n^{\alpha_n}, \quad
|\alpha|:=\alpha_1+\cdots+\alpha_n.
\]
For an integer $m >0$, denote the set
\[
{\mathbb{N}}_m^n \, := \,
\{\alpha\in {\mathbb{N}}^n|\ |\alpha|\le m \}.
\]
The symbol ${\mathbb R}[x]$ denotes the ring of polynomials in
$x$ with real coefficients, and ${\mathbb R}[x]_k$
denotes the space of polynomials in $\re[x]$
with degrees at most $k$.
For a symmetric matrix $X$, the inequality $X\succeq 0$ means
$X$ is positive semidefinite.
The superscript $^T$ denotes the transpose of a matrix or vector.
We use $[x]_m$ to denote the column vector of
all monomials in $x$ and of degrees at most $m$
(they are ordered in the graded lexicographical ordering), i.e.,
\[
[x]_m  := [1, \, x_1, \ldots, x_n, \, x_1^2,\,  x_1x_2,\,
\ldots,\, x_{n-1}x_n^{d-1}, x_n^m \,]^T.
\]
For a vector $x$, $\| x \|$ denotes its Euclidean norm.
In the space $\re^n$, $e$ denotes the vector of all ones,
while $e_i$ denotes the $i$th unit vector in the canonical basis.
For a real number $t$,
$\lceil t \rceil$ denotes the smallest integer not smaller than $t$.

The set $\re^{ \N^n_d }$ is the space of all real vectors
that are labeled by $\af \in \N_d^n$. That is, every
$y \in \re^{ \N^n_d }$ can be labeled as
\[
y \, = \, (y_\af)_{ \af \in \N_d^n }.
\]
Such $y$ is called a
{\it truncated multi-sequence} (tms) of degree $d$ \cite{ATKMP}.
For a polynomial $f \in \re[x]_r$ that is written as
\[
f = \sum_{ |\af| \leq \N^n_r } f_\af x^\af,
\]
with $r \leq d$, we define the operation
\be \label{<f,y>}
\langle f, y \rangle = \sum_{ |\af| \leq \N^n_r } f_\af y_\af.
\ee
Note that $\langle f, y \rangle$ is linear in $y$ for fixed $f$,
and is linear in $f$ for fixed $y$.
For a polynomial $q \in \re[x]_{2k}$ and the integer
$t = \lceil k- \deg(q)/2 \rceil$, the outer product
$q(x)[x]_t[x]_t^T$
is a symmetric matrix of length $\binom{n+t}{t}$.
It can be expanded as
\[
q(x)[x]_t[x]_t^T \, = \, \sum_{ \af \in \N_{2k}^n }
x_\af  Q_\af,
\]
for constant symmetric matrices $Q_\af$.
For $y \in \re^{ \N^n_{2k} }$, denote the symmetric matrix
\be \label{df:Lf[y]}
L_{q}^{(k)}[y] \, := \, \sum_{ \af \in \N_{2k}^n }
y_\af  Q_\af.
\ee
It is called the $k$th {\it localizing matrix} of $q$ and generated by $y$.
For given $q$, $L_{q}^{(k)}[y]$ is linear in $y$.
Clearly, if $q(u) \geq 0$ and $y = [u]_{2k}$, then
\[ L_{q}^{(k)}[y] = q(u) [u]_t[u]_t^T \succeq 0. \]
For instance, if $n=k=2$
and $q= 1 - x_1-x_1x_2$, then
\[
L_q^{(2)}[y]=\left [\begin{matrix}
y_{00}-y_{10}-y_{11} &  y_{10}-y_{20}-y_{21} &  y_{01}-y_{11}-y_{12} \\
y_{10}-y_{20}-y_{21} &  y_{20}-y_{30}-y_{31} &  y_{11}-y_{21}-y_{22} \\
y_{01}-y_{11}-y_{12} &  y_{11}-y_{21}-y_{22} &  y_{02}-y_{12}-y_{13} \\
\end{matrix}\right ].
\]
When $q=1$ (the constant one polynomial),
the localizing matrix $L_{1}^{(k)}[y]$
reduces to a moment matrix, which we denote as
\[
M_k[y] \, := \, L_{1}^{(k)}[y].
\]
For instance, when $n=2$, $k=3$, the matrix $M_3[y]$ is
\[
M_3[y]=\left [\begin{matrix}
y_{00} & y_{10} & y_{01} & y_{20} & y_{11} & y_{02} & y_{30} & y_{21} & y_{12} & y_{03} \\
y_{10} & y_{20} & y_{11} & y_{30} & y_{21} & y_{12} & y_{40} & y_{31} & y_{22} & y_{13} \\
y_{01} & y_{11} & y_{02} & y_{21} & y_{12} & y_{03} & y_{31} & y_{22} & y_{13} & y_{04} \\
y_{20} & y_{30} & y_{21} & y_{40} & y_{31} & y_{22} & y_{50} & y_{41} & y_{32} & y_{23} \\
y_{11} & y_{21} & y_{12} & y_{31} & y_{22} & y_{13} & y_{41} & y_{32} & y_{23} & y_{14} \\
y_{02} & y_{12} & y_{03} & y_{22} & y_{13} & y_{04} & y_{32} & y_{23} & y_{14} & y_{05} \\
y_{30} & y_{40} & y_{31} & y_{50} & y_{41} & y_{32} & y_{60} & y_{51} & y_{42} & y_{33} \\
y_{21} & y_{31} & y_{22} & y_{41} & y_{32} & y_{23} & y_{51} & y_{42} & y_{33} & y_{24} \\
y_{12} & y_{22} & y_{13} & y_{32} & y_{23} & y_{14} & y_{42} & y_{33} & y_{24} & y_{15} \\
y_{30} & y_{13} & y_{04} & y_{23} & y_{14} & y_{05} & y_{33} & y_{24} & y_{15} & y_{06} \\
\end{matrix}\right ].
\]

In the following, we review semidefinite relaxations of
semi-algebraic sets. Consider the semi-algebraic set
\be \label{df:S}
S \, := \,
\{x\in {\mathbb{R}}^n: \,
g_1(x)\geq 0, \ldots, g_t(x) \geq 0\},
\ee
for polynomials $g_1, \ldots, g_t \in \re[x]$.
Denote the degrees
\be \label{def:d}
d_j := \lceil \deg(g_j)/2\rceil, \quad
d := \max_j d_j.
\ee
For all $k\ge d$ and for all $x\in S$, we have
\[
h_j(x) \big( [x]_{k-d_j } \big)
\big( [x]_{k-d_j } \big)^T  \succeq  0,
j = 1, \ldots, t.
\]
This implies that if $y = [u]_{2k}$ and $u \in S$, then
\[
L_{g_j}^{(k)}[y] \succeq 0,
j = 1, \ldots, t.
\]
Clearly, $[x]_{k}[x]_{k}^T\succeq 0$
for all $x \in \re^n$, so
\[
M_k[y] \succeq 0
\]
for all $y = [u]_{2k}$.
So, $S$ is always contained in the set
\be \label{SDr:Sk}
S_k := \left\{ x \in \re^n
\left| \baray{c}
\exists y \in \re^{\N_{2k}^n}, \, y_0 = 1, \, M_k[y] \succeq 0,  \\
x = (y_{e_1}, \ldots, y_{e_n} ),  \\
L_{g_j}^{(k)}[y] \succeq 0 \,( j=0, 1, \ldots, t) \\
\earay \right.
\right\},
\ee
for all $k \geq d$. Each $S_k$ is the projection of
a set in $\re^{\N_{2k}^n}$
that is defined by linear matrix inequalities.
It is a {\it semidefinite relaxation} of $S$,
because $S \subseteq S_k$ for all $k \geq d$.
It holds the nested containment relation
\be \label{nest:cont}
S \subseteq \cdots \subseteq S_{k+1} \subseteq S_k
\subseteq \cdots \subseteq S_d.
\ee

\section{A complete semidefinite algorithm}

We discuss how to detect copositivity of a given matrix or tensor.
Since matrix copositivity is a special case of tensor copositivity,
we only discuss the detection of copositive tensors.

For a symmetric tensor $\mA \in \mt{S}^m( \re^n )$,
let $\mA(x)$ be the homogeneous polynomial defined as in \reff{df:A(x)}.
Clearly, $\mA$ is copositive if and only if
$\mA(x) \geq 0$ for all $x$ belonging to the standard simplex
\[
\Dt \, = \, \{x\in \re^n:\, e^Tx=1,\, x\ge 0 \}.
\]
Consider the optimization problem
\be
\label{problem:simplex}
\left\{ \baray{rl}
v^* \,:= \, \min &\mA(x)  \\
\text{s.t.} &  e^Tx =1, \, (x_1, \ldots, x_n) \ge 0.
\earay \right.
\ee
Clearly, $\mA$ is copositive if and only if
the minimum value $v^* \geq 0$.
Therefore, testing the copositivity of $\mA$
is the same as determining the sign of $v^*$.
The \reff{problem:simplex} is a polynomial optimization problem.
A standard approach for solving it is to
apply classical Lasserre relaxations~\cite{Lasserre2001Global}.
Since the feasible set is compact and the archimedean condition holds,
its asymptotic convergence is always guaranteed.
However, there are still some issues in computation:
\bit

\item The convergence of classical Lasserre relaxations may be slow for some tensors.
Since the computational cost grows rapidly as
the relaxation order increases,
people often want faster convergence in practice.

\item For some tensors $\mA$, the sequence of classical Lasserre relaxations
might fail to have finite convergence. In other words,
it may require to solve infinitely many semidefinite relaxations,
to detect copositivity. This is not ideal in applications.

\item Certifying convergence of Lasserre's relaxations
is a critical issue in detecting copositivity.
The flat extension or truncation condition
is usually used for the certifying \cite{FlatTrun}.
However, it does not always hold, especially when
\reff{problem:simplex} has infinitely many minimizers.
For such cases, certifying convergence is mostly an open question.

\eit
In this section, we construct a new hierarchy of
semidefinite relaxations that can address the above issues.

Recently, as proposed in \cite{Nie2017Tight},
there exist tight relaxations for solving polynomial optimization,
whose constructions are based on optimality conditions and
Lagrange multiplier expressions.
Assume $u$ is an optimizer of \reff{problem:simplex}.
Then it satisfies the following optimality conditions
(the $\nabla$ denotes the gradient):
\be \label{cd:kkt}
\left\{ \baray{c}
\nabla \mA(u) \, = \,  \lambda_0 e + \sum_{i=1}^n \lambda_i e_i,  \\
\lmd_1 u_1 = \cdots = \lmd_nu_n = 0, \,\,
\lambda_1 \geq 0, \ldots, \lmd_n \geq 0,
\earay \right.
\ee
where $\lmd_0, \lmd_1, \ldots, \lmd_n$
are the Lagrange multipliers.
By a simple algebraic computation (also see \cite{Nie2017Tight}),
one can show that (note the identity $x^T \nabla \mA(x) = m \mA(x)$)
\be
\left\{ \baray{rcl}
\lmd_0 &=& u^T \nabla \mA(u) = m \mA(u), \\
\lmd_i &=& \frac{\partial \mA(u)}{\partial x_i}- m\mA(u)
\,\,\, (i=1,2,\ldots,n).
\earay \right.
\ee
Because of the above expressions,
we define new polynomials:
\be
\left\{ \baray{rcl}
p_0 & :=&   m \mA(x), \\
p_i &:=& \frac{\partial \mA(x)}{\partial x_i}- m\mA(x)
\,\,\, (i=1,2,\ldots,n).
\earay \right.
\ee
Since every optimizer $u$ must satisfy \reff{cd:kkt}
and the norm $\|u\| \leq 1$,
the optimization problem \reff{problem:simplex} is equivalent to
\be
\label{problem:polynomial multipler}
\left\{ \baray{rl}
\min &  \mA(x)  \\
\text{s.t} &  e^Tx-1=p_1(x) x_1=\cdots = p_n(x)x_n = 0, \\
        &  1-\|x\|^2 \geq 0,\,  x_i \geq 0,\,   p_i(x)\ge 0 \, (i=1,\ldots,n).
\earay \right.
\ee
Then we apply Lasserre's relaxations to solve
\reff{problem:polynomial multipler}.
For the orders $k=1,2,\ldots$,
solve the semidefinite relaxation problem
\be
\label{problem:Lasserre's relaxation}
\left\{ \baray{rl}
v_k \, := \, \min  &   \langle \mA(x),y \rangle  \\
 \text{s.t} &   L^{(k)}_{e^Tx-1}[y]=0, \, L^{(k)}_{x_i p_i}[y]=0 \, (i=1,\ldots,n),  \\
        & L^{(k)}_{1-\|x\|^2}[y] \succeq 0, \, L^{(k)}_{x_i}[y] \succeq 0, \,
          L^{(k)}_{p_i}[y] \succeq 0 \, (i=1,\ldots,n),  \\
        & y_0 =1, M_k[y] \succeq 0, \, y\in \re^{\N_{2k}^n}.
\earay \right.
\ee
Note that $v^*$ is also the optimal value of
\reff{problem:polynomial multipler}. From the nested
relation \reff{nest:cont}, the feasible set of \reff{problem:polynomial multipler}
is contained in the projection of that of \reff{problem:Lasserre's relaxation},
so the optimal value $v_k$ of
\reff{problem:Lasserre's relaxation} satisfies
\[
v_1 \leq v_2 \leq \cdots \leq v^*.
\]
Clearly, if $v_k \geq 0$ for some $k$,
then $\mA$ is copositive.
Combining the above, we can get the following algorithm.

\begin{alg}
\label{alg:cop}
For a given tensor $\mA \in \mt{S}^m( \re^n )$,
let $m_0 := \lceil m/2 \rceil$.
We test its copositivity by doing the following:
\bit

\item [Step~0:] Generate a generic vector $\xi \in \re^{\N_{m}^n}$.
Let $k:= m_0$.

\item [Step~1:] Solve the semidefinite relaxation~\reff{problem:Lasserre's relaxation}.
If its optimal value $v_k \geq 0$,
then $\mA$ is copositive and stop.
If $v_k < 0$, go to Step~2.

\item [Step~2:] Solve the following semidefinite program
\be \label{las:v-Ax>=0}
\left\{ \baray{rl}
 \min  &   \langle \xi^T[x]_m,y \rangle  \\
 \text{s.t} &   L^{(k)}_{e^Tx-1}[y]=0, \,  L^{(k)}_{x_i}[y] \succeq 0, \, (i\in [n]),  \\
        & L^{(k)}_{1-\|x\|^2}[y] \succeq 0, \,
          L^{(k)}_{v_k - \mA(x)}[y] \succeq 0,      \\
        & y_0 =1, M_k[y] \succeq 0, \, y\in \re^{\N_{2k}^n},
\earay \right.
\ee
for an optimizer $\hat{y}$ if it is feasible.
If it is infeasible, let $k:=k+1$ and go to Step~1.

\item [Step~3:]  Let $u = \big( (\hat{y})_{e_1}, \ldots, (\hat{y})_{e_n} \big)$.
If $\mA(u) < 0$, then $\mA$ is not copositive and stop;
otherwise, let $k:=k+1$ and go to Step~1.

\eit

\end{alg}

In Step~2, the copositivity of $\mA$ is justified
by the relationship $v^* \geq v_k$, for all $k\geq m_0$.
In Step~3, the point $u$ must satisfy the constraint in \reff{problem:simplex}.
This is because of the constraints
$L^{(k)}_{e^Tx-1}[y]=0$
and $L^{(k)}_{x_i}[y] \succeq 0$.
In the following, we show that Algorithm~\ref{alg:cop}
must terminate within finitely many iterations,
for all tensors $\mA$. In other words,
the copositivity of every $\mA$ can be detected correctly
by solving finitely many semidefinite relaxations.
These properties are summarized as follows.

\begin{theorem}
\label{thm:alg:cvg}
For all symmetric tensors $\mA \in \mt{S}^m(\re^n)$,
Algorithm~\ref{alg:cop} has the following properties:
\bit

\item [(i)] For all $k \geq m_0$,
the semidefinite relaxation \reff{problem:Lasserre's relaxation}
is feasible and achieve its optimal value $v_k$;
moreover, $v_k = v^*$ for all $k$ sufficiently large.

\item [(ii)] For all $k \geq m_0$,
the semidefinite program \reff{las:v-Ax>=0}
has an optimizer if it is feasible.

\item [(iii)] If $\mA$ is copositive, then Algorithm~\ref{alg:cop} must stop
with $v_k \geq 0$, when $k$ is sufficiently large.

\item [(iv)] If $\mA$ is not copositive,
then, for almost all $\xi \in \re^{ \N_m^n }$,
Algorithm~\ref{alg:cop} must stop
with $f(u) < 0$ and $u \in \Dt$,
when $k$ is sufficiently large.

\eit

\end{theorem}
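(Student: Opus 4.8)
The plan is to establish the four parts by leveraging the tight-relaxation framework of \cite{Nie2017Tight}, whose key insight is that the polynomials $p_i$ encode Lagrange multiplier expressions exactly, so that the added equality constraints $p_i(x)x_i=0$ and inequalities $p_i(x)\ge 0$ do not change the minimum value while forcing finite convergence. First I would verify that \reff{problem:polynomial multipler} is a genuine reformulation of \reff{problem:simplex}: every minimizer $u$ of \reff{problem:simplex} satisfies the KKT conditions \reff{cd:kkt}, which by the explicit multiplier formulas translate into $p_i(u)u_i=0$, $p_i(u)\ge 0$, and $\|u\|\le 1$ (the last because $u\in\Dt$). Hence $u$ is feasible for \reff{problem:polynomial multipler}, and conversely any feasible point of \reff{problem:polynomial multipler} lies in $\Dt$, so the two problems share the optimal value $v^*$. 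This gives feasibility in part (i), and achievement of the optimum $v_k$ follows because each relaxation \reff{problem:Lasserre's relaxation} has a compact feasible set: the constraint $L^{(k)}_{1-\|x\|^2}[y]\succeq 0$ together with $M_k[y]\succeq 0$ bounds the feasible $y$, so the infimum is attained. The equality $v_k=v^*$ for all large $k$ is precisely the finite-convergence conclusion of the tight-relaxation theorem in \cite{Nie2017Tight}, which applies because the constraining polynomials are built from the optimality conditions; I would cite that theorem rather than reprove it.

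For part (ii), the semidefinite program \reff{las:v-Ax>=0} minimizes a generic linear functional $\langle \xi^T[x]_m, y\rangle$ over a spectrahedron that is again compact, for the same reason as above: $L^{(k)}_{1-\|x\|^2}[y]\succeq 0$ and $M_k[y]\succeq 0$ confine $y$ to a bounded set. A linear function on a nonempty compact set attains its minimum, so whenever \reff{las:v-Ax>=0} is feasible it has an optimizer $\hat{y}$.

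For part (iii), if $\mA$ is copositive then $v^*\ge 0$; since $v_k\le v^*$ and, by part (i), $v_k=v^*\ge 0$ once $k$ is large, Step~1 detects $v_k\ge 0$ and halts. The delicate case is part (iv). Here $v^*<0$, so for large $k$ we have $v_k=v^*<0$ and the algorithm enters Step~2. The constraint $L^{(k)}_{v_k-\mA(x)}[y]\succeq 0$ localizes onto the set where $\mA(x)\le v_k=v^*$, i.e.\ onto the minimizer set of \reff{problem:simplex} intersected with $\Dt$; this set is nonempty, so \reff{las:v-Ax>=0} is feasible. The main obstacle, and the crux of the argument, is to show that the extracted point $u=((\hat{y})_{e_1},\ldots,(\hat{y})_{e_n})$ actually lies in $\Dt$ with $\mA(u)<0$, rather than being a spurious moment vector. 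The role of the \emph{generic} objective $\xi$ is exactly to rule out degenerate optima: for almost all $\xi$, minimizing a generic linear form over the (finitely many, by the finiteness results for KKT systems with generic data) atoms of the minimizer set selects a single mass point, so $\hat{y}$ is the moment sequence $[u]_{2k}$ of a genuine minimizer $u\in\Dt$. I would make this rigorous by invoking the flat-truncation / moment-extraction machinery \cite{FlatTrun} together with the genericity argument from \cite{Nie2017Tight}, concluding that $\mA(u)=v^*<0$, whence Step~3 correctly refutes copositivity. The subtlety to watch is ensuring the genericity of $\xi$ interacts correctly with the finite atom support so that extraction yields a bona fide point of $\Dt$; this is where I expect the proof to require the most care.
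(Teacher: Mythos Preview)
Your treatment of parts (i)--(iii) is essentially the paper's approach. One point to tighten in (i): invoking Theorem~3.3 of \cite{Nie2017Tight} requires the archimedean condition for the constraint system, and the paper does not take this for granted. It passes to the relaxation \emph{without} the ball constraint, exhibits the explicit identity
\[
1-\|x\|^2 \;=\; (1-e^Tx)(1+\|x\|^2)+\sum_{i=1}^n x_i(1-x_i)^2+\sum_{i\neq j}x_i^2 x_j
\]
to certify archimedeanness, applies Nie's theorem there to get $v_k'=v^*$, and then sandwiches $v_k'\le v_k\le v^*$. Your direct citation glosses over this step.

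The substantive gap is in part (iv). You claim the minimizer set of $\mA$ on $\Dt$ consists of ``finitely many atoms, by the finiteness results for KKT systems with generic data,'' and hence that $\hat{y}$ equals an exact moment vector $[u]_{2k}$. But $\mA$ is a \emph{fixed} tensor, not a generic one; its minimizer set on $\Dt$ can be infinite (the Horn matrix is already an example). So neither the finiteness claim nor the exact-extraction claim is justified, and flat truncation need not hold here. The paper's argument sidesteps both issues. Once $v_k=v^*$, problem \reff{las:v-Ax>=0} becomes the $k$th Lasserre relaxation of the auxiliary polynomial problem
\[
\min\ \xi^T[x]_m \quad \text{s.t.}\quad e^Tx=1,\ x\ge 0,\ v^*-\mA(x)\ge 0,
\]
whose (possibly infinite) feasible set is compact. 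Genericity of $\xi$ is used only to force this auxiliary problem to have a \emph{unique} optimizer $u^*$. Then, by the convergence results of \cite{Swg05} (Corollary~3.5) or \cite{FlatTrun} (Theorem~3.3), the first-order moments $u^k=((\hat y^k)_{e_1},\dots,(\hat y^k)_{e_n})$ of the relaxation optimizers converge to $u^*$. Since $\mA(u^*)\le v^*<0$, continuity gives $\mA(u^k)<0$ for all large $k$, and the linear constraints force $u^k\in\Dt$. No finiteness of minimizers and no exact moment extraction are needed---only convergence of the projected sequence to the unique optimizer of the auxiliary problem.
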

\begin{proof}
(i) The feasible set of \reff{problem:simplex}
is compact, so it must have a minimizer, say, $u^*$.
Then, $u^*$ satisfies \reff{cd:kkt},
and hence $u^*$ is a feasible point for \reff{problem:polynomial multipler}.
So, the feasible set of \reff{problem:polynomial multipler} is nonempty.
This implies that the semidefinite relaxation~\reff{problem:Lasserre's relaxation}
is always feasible. By the constraint $L^{(k)}_{1-\|x\|^2}[y] \succeq 0$,
we can show that the feasible set of \reff{problem:Lasserre's relaxation}
is compact, as follows. First, we can see that
\[
1 = y_0 \geq y_{2e_1} + \cdots + y_{2e_n}.
\]
So, $0 \leq y_{2e_i} \leq 1$ since each $y_{2e_i} \geq 0$
(because $M_k[y]\succeq 0$).
Second, for all $0< | \af | \leq k-1$,
the $(\af,\af)$th diagonal entry of $L^{(k)}_{1-\|x\|^2}[y]$
is nonnegative, so
\be  \label{1-x2:y:>=0}
y_{2\af} \geq y_{2\af+2e_1} + \cdots + y_{2\af+2e_n}.
\ee
By choosing $\af = e_1,\ldots, e_n$, the same argument
can show that $0 \leq y_{2\bt} \leq 1$ for all $|\bt| \leq 2$.
By repeatedly applying \reff{1-x2:y:>=0},
one can further get that $0 \leq y_{2\bt} \leq 1$ for all $|\bt| \leq k$.
Third, note that the diagonal entries
of $M_k[y]$ are precisely $y_{2\bt}$ with $|\bt| \leq k$.
Since $M_k[y] \succeq 0$,
all the entries of $M_k[y]$ must be between $-1$ and $1$.
This means that $y$ is bounded, hence the feasible set of
\reff{problem:Lasserre's relaxation} is compact.
Therefore, \reff{problem:Lasserre's relaxation}
must achieve its optimal value $v_k$.
To prove $v_l = v^*$ for all $k$ sufficiently large, note that
\reff{problem:polynomial multipler} is the same as the optimization
\be
\label{pop:noball}
\left\{ \baray{rl}
\min &  \mA(x)  \\
\text{s.t} &  e^Tx-1=p_1(x) x_1=\cdots = p_n(x)x_n = 0, \\
        &    x_i \geq 0,\,   p_i(x)\ge 0, \, i =1,\ldots, n.
\earay \right.
\ee
Its corresponding Lasserre's relaxations are
\be
\label{las:noball}
\left\{ \baray{rl}
v_k^{\prime} \,:=\, \min  &   \langle \mA(x),y \rangle  \\
 \text{s.t} &   L^{(k)}_{e^Tx-1}[y]=0, \, L^{(k)}_{x_i p_i}[y]=0 \, (1 \le i \le n),  \\
        &  L^{(k)}_{x_i}[y] \succeq 0, \,
          L^{(k)}_{p_i}[y] \succeq 0 \, (1 \le i \le n),  \\
        & y_0 =1, M_k[y] \succeq 0, \, y\in \re^{\N_{2k}^n},
\earay \right.
\ee
for the orders $k=1,2,\ldots$.
The optimal value of \reff{pop:noball} is also $v^*$.
The feasible set of \reff{problem:Lasserre's relaxation}
is contained in that of \reff{las:noball}, so
\be \label{vkp<=vk}
v_k^{\prime}  \leq v_k \leq v^*, \quad k= m_0, m_0+1, \ldots.
\ee
Next, we show that the set of polynomials
\[
F \, := \, \left\{
(1-e^Tx)\phi + \sum_{j=1}^n x_j \big( \sum_\ell s_{j,\ell}^2 \big):\,
\phi \in \re[x], \, s_{j,\ell} \in \re[x]
\right\}
\]
is archimedean, i.e., there exists $f \in F$ such that
the inequality $f(x) \geq 0$ defines a compact set in $\re^n$.
This is true for $f=1-\|x\|^2$, because of the identity
\be
 1-\|x\|^2 =(1-e^Tx)(1+\|x\|^2)+\sum_{i=1}^n x_i(1-x_i)^2+\sum_{i\neq j}x_i^2x_j.
\ee
%
%
By Theorem 3.3 of \cite{Nie2017Tight}, we know that
$v_k^{\prime} = v^*$ when $k$ is sufficiently large.
Hence, the relation \reff{vkp<=vk} implies that
$v_k = v^*$ for all $k$ sufficiently large.

(ii) The semidefinite program \reff{las:v-Ax>=0}
also has the constraint $L^{(k)}_{1-\|x\|^2}[y] \succeq 0$.
By the same argument as in (i),
we know that the feasible set of
\reff{las:v-Ax>=0} is compact.
So, it must have an optimizer if it is feasible.

(iii) Clearly, $\mA$ is copositive if and only if
$v^* \geq 0$. By the item (i), $v_k = v^*$ for all $k$ big enough.
Therefore, if $\mA$ is copositive,
we must have $v_k \geq 0$ for all $k$ large enough.

(iv) If $\mA$ is not copositive, then $v^* < 0$.
By the item (i), there exists $k_1 \in \N$ such that
$v_k = v^*$ for all $k \geq k_1$.
Hence, for all $k \geq k_1$, \reff{las:v-Ax>=0}
is the same as
\be \label{las:v-Ax>=0:const}
\left\{ \baray{rl}
 \min  &   \langle \xi^T[x]_m,y \rangle  \\
 \text{s.t} &   L^{(k)}_{e^Tx-1}[y]=0, \,  L^{(k)}_{x_i}[y] \succeq 0, \, (i\in [n]),  \\
        & L^{(k)}_{1-\|x\|^2}[y] \succeq 0, \,
          L^{(k)}_{v^* - \mA(x)}[y] \succeq 0,      \\
        & (y)_0 =1, M_k[y] \succeq 0, \, y\in \re^{\N_{2k}^n}.
\earay \right.
\ee
It is the $k$th Lasserr's relaxation for the polynomial optimization
\be \label{pop:xi}
\left\{ \baray{rl}
 \min  &   \xi^T[x]_m  \\
 \text{s.t} & e^Tx-1 = 0,\, x \geq 0,
v^* - \mA(x) \geq 0.
\earay \right.
\ee
The feasible set of \reff{pop:xi}
is clearly compact. When $\xi$ is generically chosen in
$\re^{ \N_m^n }$, \reff{pop:xi} has a unique optimizer, say, $u^*$.
Hence, for almost all $\xi \in \re^{ \N_m^n }$,
$u^*$ is the unique optimizer.
For notation convenience, denote by
$\hat{y}^k$ the optimizer of \reff{las:v-Ax>=0}
with the relaxation order $k$.
Let $u^k = \big( (\hat{y}^k)_{e_1}, \ldots, (\hat{y}^k)_{e_n} \big)$.
By Corollary~3.5 of \cite{Swg05} or Theorem~3.3 of \cite{FlatTrun},
the sequence $\{u^k\}_{k=m_0}^{\infty}$
must converge to $u^*$, the unique optimizer of \reff{pop:xi}.
Since $\mA(u^*) \leq v^* < 0$, we must have
$\mA(u^k) < 0$ when $k$ is sufficiently large.
Moreover, the constraints $L^{(k)}_{x_i}[y] \succeq 0$
imply that $u^k \geq 0$, and
$L^{(k)}_{e^Tx-1}[y]=0$ implies that $e^Tu^k=1$.
Therefore, $u^k \in \Dt$.
\end{proof}

\begin{remark} \label{rmk:vk>=0}
In Step~1 of Algorithm~\ref{alg:cop}, we need to
test whether or not $v_k \geq 0$.
When the absolute value of $v_k$ is big,
this testing is very easy. However, if its absolute value
is very small, then testing the sign might be difficult.
Note that the semidefinite relaxation~\reff{problem:Lasserre's relaxation}
is solved numerically, i.e., $v_k$ is accurate up to a tiny round-off error.
This difficulty is not because of theoretical properties of Algorithm~\ref{alg:cop},
but due to round-off errors, which occur in all numerical methods.
In practice, if $v_k$ is positive or close to zero
(say, $v_k > - 10^{-6}$), then it is reasonably well to claim
that $\mA$ is copositive.
\end{remark}

\section{Numerical Experiments}

This section presents numerical experiments of
applying Algorithm~\ref{alg:cop} to detect matrix and tensor copositivity.
The computation is implemented in MATLAB R2016b,
on a Lenovo Laptop with CPU@2.90GHz and RAM 16.0G.
Algorithm~\ref{alg:cop} can be implemented by using the software
{\tt Gloptipoly~3} \cite{Gloptipoly},
which calls the semidefinite program solver {\tt SeDuMi} \cite{sedumi}.
For convenience of presentation, we only display $4$ decimal digits.
Consumed time in computation is reported in seconds (s).
Recall that $v_k$ is the optimal value of \reff{problem:Lasserre's relaxation}.
We refer to Remark~\ref{rmk:vk>=0}
for how to test the sign condition $v_k \geq 0$.

First, we see some copositive matrices that is not a sum of
psd and nonnegative matrices.

\begin{example}
\label{Example:Horn}
Consider the Horn matrix \cite{HalNew63}
\be \label{mat:horn}
    H := \begin{bmatrix}
    \begin{array}{rrrrr}
    1 & -1 & 1 & 1 & -1 \\
    -1 & 1 & -1 & 1 & 1 \\
    1 & -1 & 1 & -1 & 1 \\
    1 & 1 & -1 & 1 & -1 \\
    -1 & 1 & 1 & -1 & 1
    \end{array}
\end{bmatrix}.
\ee
It is known that $H$ is copsoitive
but is not a sum of psd and nonnegative matrices.
We apply Algorithm~\ref{alg:cop} to test its copositivity.
The lower bounds $v_k$ are shown in Table~\ref{cptab:Horn}.
\bcen
\begin{table}[htb]
\caption{
Comp. results for the matrix in Example~\ref{Example:Horn}
}
\begin{tabular}{|c|c|c|c|}  \hline
 $k$        &     1       &     2       &         3       \\  \hline
 $v_k$    &  $-0.7889$  &  $-0.0472$  &   $-7.0 \times 10^{-8}$  \\  \hline
 time(s)          &   $0.59$    &   $0.35$    &   $1.68$     \\ \hline
\end{tabular}
\label{cptab:Horn}
\end{table}
\ecen
For $k=3$, we can conclude the copositivity of $H$,
up to a tiny round-off error.
\end{example}

\begin{example}
\label{mat:Hof:Per}
Consider the Hoffman-Pereira matrix
\be
P \, := \,
\left[\begin{array}{rrrrrrr}
     1 & -1 & 1 & 0 & 0 & 1 & -1 \\
     -1 & 1 & -1 & 1 & 0 & 0 & 1 \\
     1 & -1 & 1 & -1 & 1 & 0 & 0 \\
     0 & 1 & -1 & 1 & -1 & 1 & 0 \\
     0 & 0 & 1 & -1 & 1 & -1 & 1 \\
     1 & 0 & 0 & 1 & -1 & 1 & -1 \\
     -1 & 1 & 0 & 0 & 1 & -1 & 1 \\
\end{array} \right].
\ee
Like the Horn matrix, $P$ is also copositive but not a sum of
psd and nonnegative matrices \cite{hoffman1973copositive}.
The computational results by Algorithm~\ref{alg:cop}
are shown in Table~\ref{cptab:HofPer}.
\bcen
\begin{table}[htb]
\caption{
Comp. results for the matrix in Example~\ref{mat:Hof:Per}
}
\begin{tabular}{|c|c|c|c|}  \hline
 $k$      &     1       &     2       &         3       \\  \hline
 $v_k$    &   $-0.4503$   &   $-0.0250$  &   $-2.2 \times 10^{-7}$ \\  \hline
 time(s)     &    $0.58$   &   $0.60$   &  $24.85$  \\ \hline
\end{tabular}
\label{cptab:HofPer}
\end{table}
\ecen
The copositivity of $P$ is detected when $k=3$,
up to a tiny round-off error.
\end{example}

\begin{example}
\label{mat:Hild}
Consider the matrix $B$ that is given as
\[
\left[\begin{array}{ccccc}
 1   & -\cos \psi_4 & \cos(\psi_4+\psi_5) & \cos(\psi_2+\psi_3) & -\cos \psi_3\\
-\cos \psi_4 &  1 &  -\cos \psi_5 & \cos(\psi_1+\psi_5) & \cos(\psi_3+\psi_4)\\
\cos(\psi_4+\psi_5) & -\cos \psi_5 & 1 & -\cos \psi_1&  \cos(\psi_1+\psi_2)\\
\cos(\psi_2+\psi_3) & \cos(\psi_1+\psi_5) & -\cos \psi_1& 1 & -\cos \psi_2 \\
 -\cos \psi_3 & \cos(\psi_3+\psi_4)&\cos(\psi_1+\psi_2)& -\cos \psi_2 &1
\end{array} \right],
\]
where each $\psi_i \ge 0$ and $\sum_{i=1}^5 \psi_i <\pi$.
The matrix $B$ gives an extreme ray of the cone $\mc{COP}_n$
but is not a sum of psd and nonnegative matrices
\cite{hildebrand2012extreme}. For convenience, we test its copositivity for the case
\[
\psi_1 =  \psi_2 =  \psi_3 =  \psi_4 =  \psi_5 =  \pi/6.
\]
By Algorithm~\ref{alg:cop}, we get
the computational results in Table~\ref{cptab:Hild}.
\bcen
\begin{table}[htb]
\caption{
Comp. results for the matrix in Example~\ref{mat:Hild}
}
\begin{tabular}{|c|c|c|c|}  \hline
 $k$      &     1       &     2      &         3       \\  \hline
 $v_k$    &  $-0.2218$ & $-0.0153$   &   $-1.2 \times 10^{-8}$  \\  \hline
 time(s)  &  $0.61$  &  $0.32$   &   $1.11$   \\  \hline
\end{tabular}
\label{cptab:Hild}
\end{table}
\ecen
The copositivity is detected for $k=3$,
up to a tiny round-off error.
\end{example}

Next, we see a matrix that is not copositive.

\begin{example}
Consider the following matrix
\be \label{mat:horn99}
   A := \begin{bmatrix}
    \begin{array}{rrrrr}
    1 & -1 & 1 &  1 & -1 \\
    -1 & 1 & -1 & 1 & 1 \\
    1 & -1 & 1 & -1 & 1 \\
    1 & 1 & -1 & 1 & -1 \\
    -1 &  1 & 1 & -1 & 0.99
    \end{array}
\end{bmatrix}.
\ee
It is obtained from the Horn matrix, by subtracting
$0.01$ from the $(5,5)$-entry.
We apply Algorithm~\ref{alg:cop} to test copositivity.
It terminates at $k=3$, with the point
\[
u \,=\, ( 0.4474,\,   0.0000,\,    0.0000,\,    0.0513,\,    0.5012)
\]
that refutes the copositivity,
because $u^TAu \approx  -0.0025 < 0$.
\end{example}

\begin{example}
\label{Mot:Rob:Choi}
We consider three tensors $\mA \in \mt{S}^3(\re^3)$
whose polynomials $\mA(x)$ are respectively given as
\be
\left\{ \baray{rl}
 \text{Motzkin: } & \mA(x) := x_1^2x_2+x_1x_2^2+x_3^3-3x_1x_2x_3, \\
 \text{Robinson: }& \mA(x) := x_1^3+x_2^3+x_3^3-x_1^2x_2-x_1x_2^2-x_1^2x_3 \\
                  & \qquad \qquad -x_1x_3^2-x_2^2x_3-x_2x_3^2+3x_1x_2x_3, \\
\text{Choi-Lam: } & \mA(x) := x_1^2x_2+x_2^2x_3+x_3^2x_1-3x_1x_2x_3.
\earay \right.
\ee
When each $x_i$ is replaced by $x_i^2$, the polynomials $\mA(x)$
are respectively the Motzkin, Robinson and Choi-Lam polynomials
(they are all nonnegative but not sum of squares \cite{Rez00}).
Hence, these tensors are all copositive.
We detect their copositivity by Algorithm~\ref{alg:cop}.

The computational results are shown in Table~\ref{cptab:Choi-Lam}.
For all these tensors, the copositivity is confirmed for $k=3$,
up to tiny round-off errors.
\bcen
\begin{table}[htb]
\caption{
Comp. results for the tensors in Example~\ref{Mot:Rob:Choi}
}
\begin{tabular}{|c|c|c|c|c|c|c|}  \hline
     $\mA(x)$   &   \multicolumn{2}{c|}{Motzkin}     &
 \multicolumn{2}{c|}{Robinson}  &  \multicolumn{2}{c|}{Choi-Lam}   \\  \hline
 $k$   & $v_k$ & time(s) & $v_k$ & time(s) & $v_k$ & time(s) \\ \hline
  $2$  &  $-0.0045$  & 0.78   & $-0.0208$ & 0.76 & $-0.0129$ & 0.77  \\ \hline
  $3$  &  $-4.3 \times 10^{-8}$  & 0.45
       &  $-4.9 \times 10^{-8}$  & 0.23
       &  $-2.1 \times 10^{-8}$  & 0.37  \\ \hline
\end{tabular}
\label{cptab:Choi-Lam}
\end{table}
\ecen

\end{example}

\begin{example}
\label{Example:x4-16x2}
Consider the tensor $\mA \in \mt{S}^4(\re^4)$ that is given as
\[
\mA_{1     1     1     1} =     1,\,
\mA_{1     1     1     2} =     1,\,
\mA_{1     1     1     3} =     1,\,
\mA_{1     1     1     4} =     1,\,
\mA_{1     1     2     2} =     1,\,
\mA_{1     1     2     3} =     1,\,
\mA_{1     1     2     4} =     1,\,
\]
\[
\mA_{1     1     3     3} =     1,\,
\mA_{1     1     3     4} =     1,\,
\mA_{1     1     4     4} =     1,\,
\mA_{1     2     2     2} =     1,\,
\mA_{1     2     2     3} =    -3,\,
\mA_{1     2     2     4} =     1,\,
\mA_{1     2     3     3} =     1,\,
\]
\[
\mA_{1     2     3     4} =    -3,\,
\mA_{1     2     4     4} =     1,\,
\mA_{1     3     3     3} =     1,\,
\mA_{1     3     3     4} =     1,\,
\mA_{1     3     4     4} =     1,\,
\mA_{1     4     4     4} =     1,\,
\mA_{2     2     2     2} =     1,\,
\]
\[
\mA_{2     2     2     3} =     1,\,
\mA_{2     2     2     4} =     1,\,
\mA_{2     2     3     3} =     1,\,
\mA_{2     2     3     4} =     1,\,
\mA_{2     2     4     4} =     1,\,
\mA_{2     3     3     3} =     1,\,
\mA_{2     3     3     4} =    -3,\,
\]
\[
\mA_{2     3     4     4} =     1,\,
\mA_{2     4     4     4} =     1,\,
\mA_{3     3     3     3} =     1,\,
\mA_{3     3     3     4} =     1,\,
\mA_{3     3     4     4} =     1,\,
\mA_{3     4     4     4} =     1,\,
\mA_{4     4     4     4} =     1.
\]
One can verify that
\[
\mA(x) \,=\, (x_1+x_2+x_3+x_4)^4-16(x_1x_2+x_2x_3+x_3x_4)^2.
\]
This tensor is copositive, because $\mA(x)$ has the factorization
\[
\big( (x_1-x_2+x_3-x_4)^2+4x_1x_4 \big)
\cdot \big( (x_1+x_2+x_3+x_4)^2+4(x_1x_2+x_2x_3+x_3x_4)  \big).
\]
For $k=2$, we get $v_2 \approx -0.3862$;
it took about $0.8$ second.
For $k=3$, we get $v_3 \approx -1.4 \times 10^{-7}$.
It took about $0.6$ second.
The copositivity is detected at $k=3$, up to a round-off error.
\end{example}

\begin{example}
\label{Example:Clique}
Copositive matrices have important applications in graph theory.
For a graph $G$, the maximum size of its complete subgraph
is called the clique number of $G$, denotes $\gamma(G)$.
Let $A$ be the adjacency matrix of $G$
and $E$ be the matrix of all ones.
It can be shown that \cite{DeKlerk2002Approximation}
\be
\gamma(G)=\min \{
\lambda: \, \lambda(E-A)-E \,\text{ is copositive}
\}.
\ee
Therefore, we can determine the clique number of a graph
by checking copositivity. For instance,
consider the graph $G$ with the following adjacency matrix
\be
A =\left(
     \begin{array}{cccccccc}
        0 & 1 & 0 & 1 & 1 & 0 & 0 & 1 \\
        1 & 0 & 0 & 1 & 0 & 1 & 1 & 1 \\
        0 & 0 & 0 & 0 & 0 & 0 & 0 & 0 \\
        1 & 1 & 0 & 0 & 1 & 0 & 1 & 0 \\
        1 & 0 & 0 & 1 & 0 & 1 & 1 & 1 \\
        0 & 1 & 0 & 0 & 1 & 0 & 0 & 1 \\
        0 & 1 & 0 & 1 & 1 & 0 & 0 & 1 \\
        1 & 1 & 0 & 0 & 1 & 1 & 1 & 0 \\
     \end{array}
 \right).
\ee
One can check that its clique number $\gamma(G)=3$,
so the matrix $B:=3(E-A)-E$ is copositive.
We apply Algorithm~\ref{alg:cop} to detect the copositivity.
For $k=1$,  $v_1 \approx -1.7039$; it took about $0.6$ second.
For $k=2$, $v_2 \approx  -1.6 \times 10^{-7}$; it took about $1.6$ seconds.
The copositivity is detected when $k=2$.
\end{example}

\begin{example}
\label{Example:HyperG}
Consider the tensor $\mA \in \mt{S}^3(\re^5)$ given such that
\begin{eqnarray*}
        \mathcal{A}(:,:,1)=
        \left(
     \begin{array}{ccccc}
        1 & 1 & 0 & 1 & 1 \\
        1 & 1 & 0 & 0 & 1 \\
        0 & 0 & 0 & 0 & 0 \\
        1 & 0 & 0 & 0 & 0 \\
        1 & 1 & 0 & 0 & 1 \\
     \end{array}
 \right), &
 \mathcal{A}(:,:,2)=
 \left(
     \begin{array}{ccccc}
        1 & 1 & 0 & 0 & 1 \\
        1 & 0 & 0 & 0 & 1 \\
        0 & 0 & 1 & 0 & 0 \\
        0 & 0 & 0 & 0 & 1 \\
        1 & 1 & 0 & 1 & 0 \\
     \end{array}
 \right), \\
 A(:,:,3)=
 \left(
     \begin{array}{ccccc}
        0 & 0 & 0 & 0 & 0 \\
        0 & 0 & 1 & 0 & 0 \\
        0 & 1 & 0 & 0 & 0 \\
        0 & 0 & 0 & 1 & 1 \\
        0 & 0 & 0 & 1 & 0 \\
     \end{array}
 \right), &
 A(:,:,4)=\left(
     \begin{array}{ccccc}
        1 & 0 & 0 & 0 & 0 \\
        0 & 0 & 0 & 0 & 1 \\
        0 & 0 & 0 & 1 & 1 \\
        0 & 0 & 1 & 0 & 0 \\
        0 & 1 & 1 & 0 & 0 \\
     \end{array}
 \right), \\
 A(:,:,5)=\left(
     \begin{array}{ccccc}
        1 & 0 & 0 & 0 & 0 \\
        0 & 0 & 0 & 0 & 1 \\
        0 & 0 & 0 & 1 & 1 \\
        0 & 0 & 1 & 0 & 0 \\
        0 & 1 & 1 & 0 & 0 \\
     \end{array}
 \right).
\end{eqnarray*}
The tensor $\mA$ is clearly copositive, since its entries are nonnegative.
We consider the new tensor of the form
\[
\mathcal{H}=\rho (\mathcal{I}+\mathcal{A})-\mathcal{E},
\]
with $\rho$ a parameter. Here, $\mc{E}$ is the tensor of all ones,
and $\mc{I}$ is the tensor such that
$\mc{I}_{ijk}=1$ if $i=j=k$ and $\mc{I}_{ijk}=0$ otherwise.
The copositivity of tensors like
$\mc{H}$ is important in estimating coclique numbers of hypergraphs
\cite{Chen2016Copositive}.
For a range of values of the parameter $\rho$,
the computational results are shown in Table~\ref{cptab:hypG}.
\bcen
\begin{table}[htb]
\caption{
Comp. results for the matrix in Example~\ref{Example:HyperG}
}
\begin{tabular}{|c|r|c|c|}  \hline
$\rho$  &  $v_2$ \qquad \qquad    &  time(s)  & copositivity   \\  \hline
4.400   &  $1.1 \times 10^{-2}$   &   0.15   &  yes    \\  \hline
4.353   &  $3.2 \times 10^{-4}$   &   0.15   &  yes  \\   \hline
4.352   &  $9.8 \times 10^{-5}$   &   0.18   &  yes  \\   \hline
4.351   &  $-1.3 \times 10^{-4}$  &   0.16   &  no   \\   \hline
4.350   &  $-3.6 \times 10^{-4}$  &   0.16   &  no   \\   \hline
4.300   &  $-1.1 \times 10^{-2}$  &   0.15   &  no   \\   \hline
\end{tabular}
\label{cptab:hypG}
\end{table}
\ecen
In all the computation, the order $k=2$ is enough for detecting copositivity.
Because of the monotonicity of $\mc{H}$ in $\rho$,
we can also conclude that $\mc{H}$ is copositive for $\rho \geq 4.4$
and not copositive for $\rho \leq 4.3$.
\end{example}

\section{Conclusions and discussions}

This paper gives a complete semidefinite algorithm for
detecting copositive of matrices and tensors.
If the matrix or tensor $\mA$ is copositive,
we can get a certificate for that, i.e.,
a nonnegative lower bound for the optimal value
$v^*$ of \reff{problem:simplex}.
If it is not copositive, we can get a point that refutes the copositivity,
i.e., a point $u \in \Dt$ such that $\mA(u) <0$. For all $\mA$,
the copositivity can be detected by solving a finite number of
semidefinite relaxations. This property is shown in Theorem~\ref{thm:alg:cvg}.

\subsection*{Completely positive tensors and matrices}

A symmetric tensor $\mA \in \mt{S}^m(\re^n)$ is {\it completely positive}
if there exists vectors $u_1, \ldots, u_r \in \re_+^n$ such that
\[
\mA \, = \, (u_1)^{\otimes m} + \cdots + (u_r)^{\otimes m}.
\]
When the order $m=2$, this gives the definition of completely positive matrices.
We denote by $\mc{CP}_n$ the cone of $n$-by-$n$ completely positive matrices,
and $\mc{CP}_{m,n}$ the cone of completely positive tensors in $\mt{S}^m(\re^n)$.
The cone $\mc{CP}_n$ is dual to $\mc{COP}_n$
and $\mc{CP}_{m,n}$ is dual to $\mc{COP}_{m,n}$,
under the standard Frobenius norm product.
The problem of checking whether or not a matrix is completely positive
is NP-hard \cite{DicGij14}. This is also true for detecting completely positive tensors.
We refer to \cite{Bom00,DicDur12,ATKMP,SpoDur14}
for recent work on completely positive matrices,
and refer to \cite{FanZhou17,LuoQi16,QXX14}
for completely positive tensors.
A survey can be found in \cite{BerSha03,BerDurSha15}.

\subsection*{Copositive programming}

A basic linear copositive optimization problem is
\be \label{linop:cop}
\left\{\baray{rl}
\min & \mbox{trace}(CX) \\
s.t. & \mbox{trace}(A_iX) = b_i \, (i=1,\ldots,\ell), \\
 &  X \in \mc{COP}_n,
\earay \right.
\ee
where $A_1, \ldots, A_\ell, C$ are given $n$-by-$n$ symmetric matrices
and $b_1,\ldots, b_\ell$ are given reals.
The dual optimization problem of \reff{linop:cop} is
\be \label{dualop:cp}
\left\{\baray{rl}
\max &  \sum_{i=1}^\ell b_i y_i \\
s.t. & C - \sum_{i=1}^\ell y_i A_i \in \mc{CP}_n.
\earay \right.
\ee
When there is no objective in \reff{linop:cop} (resp., \reff{dualop:cp}),
it is reduced to a feasibility problem about copositive (resp., completely positive)
under linear constraints.
We refer to \cite{Bomz12,Dur10} for surveys in the area.
The linear conic optimization about the copositive
(or completely positive) cone is a special case of
linear optimization with moment (or nonnegative polynomial) conic constraints,
which was discussed in \cite{linopt}.

\subsection*{Comparison with classical Lasserre relaxations}

A symmetric tensor (or matrix) $\mA$ is copositive if and only if
the optimal value $v^*$ of \reff{problem:simplex} is greater than
or equal to zero. Note that \reff{problem:simplex} is
a polynomial optimization problem.
The classical Lasserre's hierarchy of semidefinite relaxations \cite{Lasserre2001Global}
can be applied to solve it. For a relaxation order $k$,
the Lasserre relaxation for solving \reff{problem:simplex} is
\be
\label{stdLas:kth}
\left\{ \baray{rl}
 \min  &   \langle \mA(x),y \rangle  \\
 \text{s.t} &   L^{(k)}_{e^Tx-1}[y]=0, \, L^{(k)}_{1-x'x}[y] \succeq 0 \,   \\
        &  L^{(k)}_{x_i}[y] \succeq 0 \, (1 \leq i \leq n),  \\
        & y_0 =1, M_k[y] \succeq 0, \, y\in \re^{\N_{2k}^n}.
\earay \right.
\ee
Let $\nu_k$ be the optimal value of \reff{stdLas:kth}.
Since the feasible set is compact and the archimedean condition holds,
one can also show that $\nu_k \to v^*$ as $k \to \infty$. However,
compared with the relaxation~\reff{problem:Lasserre's relaxation}
used by Algorithm~\ref{alg:cop}, \reff{stdLas:kth} is weaker.
This is because the feasible set of
\reff{problem:Lasserre's relaxation} is contained in that of \reff{stdLas:kth}.
So, $\nu_k \leq v_k \leq v^*$ for all relaxation orders $k$.
Here, we give an example of comparing the lower bounds $\nu_k, v_k$.
Consider the tensor in Example~\ref{Example:x4-16x2}.
The computational results are compared in Table~\ref{compare:x4-16x2}.
The optimal value $v^*=0$.
\begin{table}[htb]
    \centering
\caption{
A comparision of relaxations
\reff{problem:Lasserre's relaxation} and \reff{stdLas:kth}
for the tensor in Example~\ref{Example:x4-16x2}.
}
\begin{tabular}{|c|c|c|c|c|} \hline
\multirow{2}{*}{$k$} & \multicolumn{2}{|c|}{ relaxation~\reff{problem:Lasserre's relaxation} }
    & \multicolumn{2}{|c|}{ relaxation~\reff{stdLas:kth} } \\
\cline{2-5}  & time & $v_k$  & time & $\nu_k$ \\  \hline
2  & 0.8306  & $-0.3862$  &  0.2274   &  $-0.3862$  \\  \hline
3  & 0.5528  &  $-1.4 \times 10^{-7}$   &  0.4428  &  $-0.0010$ \\  \hline
4  & 1.5565  &  $-3.0 \times 10^{-7}$   &  1.8834  &  $-0.0002$  \\ \hline
5  & 8.0350  &  $-3.7 \times 10^{-7}$   &  10.8022  &  $-0.0001$  \\ \hline
\end{tabular}
\label{compare:x4-16x2}
\end{table}
For $k=2$, $v_k =\nu_k$; but for $k=3,4,5$, $v_k \gg \nu_k$.
Indeed, Algorithm~\ref{alg:cop} terminates
at $k=3$, and the coposivity is detected.
As a comparison, the convergence of $\nu_k$ to $v^*$ is slower.

\subsection*{Comparison with other algorithms based on simplicial partition}
There also exists algorithms for detecting copositivity
based on simplitical partition, such as the work
\cite{Bundfuss2008Algorithmic,CHQ17,sponsel2012improved}.
Typically, when a matrix or tensor lies in the interior
of the copositive cone, the copositivity can be detected
by this type of algorithms. However,
if it lies on the boundary of copositive cone,
these algorithms usually have difficulty in the detection.
For instance, when these methods are applied for
the Hoffman-Pereira matrix in Example~\ref{mat:Hof:Per}
and the Motzkin tensor in Example~\ref{Mot:Rob:Choi},
they cannot detect the copositivity after $10000$ iterations.
However, in the contrast, our Algorithm~\ref{alg:cop}
can detect the copositivity for them in $2$ or $3$ iterations.
An advantage of our method is that
Algorithm~\ref{alg:cop} can determine whether or not
a matrix/tensor is copositive within finitely many iterations,
no matter it lies in the interior, exterior or boundary of the copositive cone.
This property is proved in Theorem~\ref{thm:alg:cvg}.

\bigskip
\noindent
{\bf Acknowledgement}
The research was partially supported by the NSF grants
DMS-1417985 and DMS-1619973.

\bibliographystyle{plain}

\begin{thebibliography}{99}





\bibitem{andersson1995criteria}
L.~Andersson, G.~Chang, and T.~Elfving.
\newblock Criteria for copositive matrices using simplices and barycentric coordinates.
\newblock {\em Linear Algebra and its Applications}, 220:9--30, apr 1995.



\bibitem{BerSha03}
A.~Berman and N.~Shaked-Monderer.
\newblock Completely positive matrices.
\newblock {\em World Scientific Publishing}, 2003.


\bibitem{BerDurSha15}
A.~Berman, M.~D\"{u}r, and N.~Shaked-Monderer.
\newblock Open problems in the theory of completely positive and copositive matrices.
\newblock {\em Electronic Journal of Linear Algebra} 29 (2015), 46-58.


\bibitem{Bom00}
I.~Bomze.
\newblock Linear-time detection of copositivity for tridiagonal matrices
and extension to block-tridiagonality.
\newblock {\em SIAM J. Matrix Anal. Appl.}
21 (2000), 840-848.



\bibitem{Bomz12}
I.~Bomze.
\newblock Copositive optimization - recent developments and applications.
\newblock {\em European Journal
of Operational Research} 216 (2012), 509–520.

%
%

\bibitem{Bomze2013Copositivity}
I.~Bomze and G.~Eichfelder.
\newblock Copositivity detection by difference-of-convex decomposition and $\omega$-subdivision.
\newblock {\em  Math. Program.}, pages 1--36, 2013.


\bibitem{Bundfuss2008Algorithmic}
S.~Bundfuss and M.~D{\"u}r.
\newblock Algorithmic copositivity detection by simplicial partition.
\newblock {\em Linear Algebra and its Applications}, 428(7):1511--1523, 2008.



\bibitem{Burer2009copositive}
S.~Burer.
\newblock On the copositive representation of binary and continuous nonconvex
  quadratic programs.
\newblock {\em  Math. Program.}, 120(2):479--495, 2009.

\bibitem{Che2016Positive}
M.~Che, L.~Qi, and Y.~Wei.
\newblock Positive-definite tensors to nonlinear complementarity problems.
\newblock {\em Journal of Optimization Theory and Applications},
168(2):475--487, 2016.


\bibitem{Chen2016Copositive}
H.~Chen, Z.~Huang, and L.~Qi.
\newblock Copositive tensor detection and its applications in physics and
  hypergraphs.
\newblock {\em arXiv preprint arXiv:1609.07919}, 2016.



\bibitem{CHQ17}
H.~Chen, Z.~Huang and L.~Qi.
\newblock Copositivity detection of tensors: theory and algorithm.
\newblock {\em J. Optimization Theory and Applications}
174(3): 746-761 (2017).


%
%

\bibitem{DeKlerk2002Approximation}
E.~de~Klerk and D.~Pasechnik.
\newblock Approximation of the stability number of a graph via copositive
  programming.
\newblock {\em SIAM J. Optim.}, 12(4):875--892, 2002.


%
%

\bibitem{Dia62}
P.~Diananda.
\newblock
On non-negative forms in real variables some or all of which are non-negative.
\newblock {\em Proceedings of the Cambridge Philosophical Society}
 58 (1962), 17–25.



\bibitem{DicDur12}
P.~Dickinson and M.~D\"{u}r.
\newblock Linear time complete positivity and detection and decomposition
of sparse matrices.
\newblock {\em SIAM J. Matrix Anal. Appl.}
33 (2012), 701–720.


\bibitem{DDGH13}
P.~Dickinson, M.~D\"{u}r, L.~Gijben and R.~Hildebrand.
\newblock Scaling relationship between the copositive
cone and Parrilo’s first level approximation.
\newblock {\em Optimization Letters} 7 (2013), 1669-1679.




\bibitem{DicGij14}
P.~Dickinson and L.~Gijben.
\newblock On the computational complexity of membership problems for
the completely positive cone and its dual.
\newblock {\em Computational Optimization and Applications}
57 (2014), 403–415.



\bibitem{Dukanovic2010Copositive}
I.~Dukanovic and .~Rendl.
\newblock Copositive programming motivated bounds on the stability and the chromatic numbers.
\newblock {\em  Math. Program.}, 121(2):249--268, 2010.



\bibitem{Dur10}
M. D\"{u}r.
\newblock Copositive Programming - a Survey.
In: M.~Diehl, F.~Glineur, E.~Jarlebring, W.~Michiels (Eds.),
\newblock {\em Recent Advances in Optimization and its Applications in Engineering},
Springer 2010, pp. 3-20.


\bibitem{DurHU13}
M.~D\"{u}r and J.~Hiriart-Urruty.
\newblock Testing copositivity with the help of difference-of-convex optimization
\newblock {\em  Math. Program.}, Vol.~140, No.~1, 31-43, 2013.


\bibitem{FanZhou17}
J.~Fan and A.~Zhou.
\newblock A semidefinite algorithm for completely positive tensor decomposition.
\newblock {\em Comp. Opt. and Appl.} 66(2): 267-283 (2017).




\bibitem{Facchinei2007Finite}
F.~Facchinei and J.~Pang.
\newblock {\em Finite-dimensional variational inequalities and complementarity
  problems}.
\newblock Springer Science \& Business Media, 2007.

%
%


\bibitem{HalNew63}
M.~Hall and M.~Newman.
\newblock Copositive and completely positive quadratic forms.
\newblock {\em Proceedings
of the Cambridge Philosophical Society} 59 (1963), 329–33.


%
%



%
%


\bibitem{Gloptipoly}
D.~Henrion, J.~Lasserre  and J.~Loefberg.
\newblock GloptiPoly 3: moments, optimization and semidefinite programming.
\newblock {\em Optim. Methods Softw.}, 24 (2009), pp. 761--779.




\bibitem{hildebrand2012extreme}
R.~Hildebrand.
\newblock The extreme rays of the $5\times 5$ copositive cone.
\newblock {\em Linear Algebra and its Applications}, 437(7):1538--1547, 2012.


\bibitem{HirSee10}
J.~Hiriart-Urruty and A.~Seeger.
\newblock A variational approach to copositive matrices.
\newblock {\em SIAM Review} 52 (2010), 593–629.




\bibitem{hoffman1973copositive}
A.~Hoffman and F.~Pereira.
\newblock On copositive matrices with $-1, 0, 1$ entries.
\newblock {\em Journal of Combinatorial Theory, Series A}, 14(3):302--309,
  1973.

\bibitem{ikramov2002linear}
K.~Ikramov.
\newblock Linear-time algorithm for verifying the copositivity of an acyclic
  matrix.
\newblock {\em Computational mathematics and mathematical physics},
  42(12):1701--1703, 2002.


\bibitem{Jacobson2000Extensions}
D.~Jacobson.
\newblock {\em Extensions of linear-quadratic control, optimization and matrix
  theory}, volume 133.
\newblock Academic press, 2000.



\bibitem{kannike2016vacuum}
K.~Kannike.
\newblock Vacuum stability of a general scalar potential of a few fields.
\newblock {\em The European Physical Journal C}, 76(6):1--16, 2016.


%
%


\bibitem{Kaplan00}
W.~Kaplan.
\newblock A test for copositive matrices.
\newblock {\em Linear Algebra and its Applications}
313 (2000), 203–206.


\bibitem{Lasserre2001Global}
J.~Lasserre.
\newblock Global optimization with polynomials and the problem of moments.
\newblock {\em SIAM J. Optim.}, 11(3):796--817, 2001.

%
%

\bibitem{LuoQi16}
Z.~Luo and L.~Qi.
\newblock Completely positive tensors: properties,
easily checkable subclasses, and tractable relaxations.
\newblock {\em SIAM J. Matrix Anal. Appl.} 37(4): 1675-1698 (2016)


\bibitem{MasSho07}
O.~Mason and R.~Shorten.
\newblock On linear copositive lyapunov functions and the stability of
switched positive linear systems.
\newblock {\em IEEE Transactions on Automatic Control}
52 (2007), 1346--1349.


\bibitem{Motzkin1818Copositive}
T.~Motzkin.
\newblock Copositive quadratic forms.
\newblock {\em National Bureau of Standards Report}, 1952:11--22, 1818.

%
%

\bibitem{FlatTrun}
J.~Nie.
\newblock Certifying convergence of lasserre's hierarchy via flat truncation.
\newblock {\em  Math. Program.},
Ser. A, Vol. 142, No. 1-2 (2013), pp. 485-510.



%
%
%


\bibitem{ATKMP}
J.~Nie.
\newblock The $\mathcal{A}$-truncated $K$-moment problem.
\newblock {\em Foundations of Computational Mathematics},
Vol. 14 No. 6 (2014), pp. 1243-76.



\bibitem{linopt}
J.~Nie.
\newblock Linear optimization with cones of moments and nonnegative polynomials.
\newblock {\em Math. Program.}, 153 (2015), 247--274.




\bibitem{Nie2017Tight}
J.~Nie.
\newblock Tight relaxations for polynomial optimization and lagrange multiplier
  expressions.
\newblock {\em arXiv preprint arXiv:1701.01549}, 2017.


\bibitem{Qi2013Symmetric}
L.~Qi.
\newblock Symmetric nonnegative tensors and copositive tensors.
\newblock {\em Linear Algebra and its Applications}, 439(1):228--238, 2013.


\bibitem{QXX14}
L.~Qi, C.~Xu and Y.~Xu.
\newblock Nonnegative tensor factorization, completely positive tensors, and a
hierarchical elimination algorithm.
\newblock {\em SIAM J. Matrix Anal. Appl.}
35 (2014), 1227–1241.



%
%



\bibitem{Rez00}
B.~Reznick.
\newblock Some concrete aspects of Hilbert's $17^{th}$ problem.
\newblock {\em Contemp. Math.},  Vol.~ 253, pp. 251-272,
American Mathematical Society, 2000.





\bibitem{Song2014Necessary}
Y.~Song and L.~Qi.
\newblock Necessary and sufficient conditions for copositive tensors.
\newblock {\em Linear and Multilinear Algebra} 63 (2015), 120-131.



\bibitem{song2014properties}
Y.~Song and L.~Qi.
\newblock Properties of tensor complementarity problem and some classes of
  structured tensors.
\newblock {\em arXiv preprint arXiv:1412.0113}, 2014.

\bibitem{Song2016Tensor}
Y.~Song and L.~Qi.
\newblock Tensor complementarity problem and semi-positive tensors.
\newblock {\em J. Optim. Theory Appl.},
  169(3):1069--1078, 2016.

\bibitem{sponsel2012improved}
J.~Sponsel, S.~Bundfuss, and M.~D{\"u}r.
\newblock An improved algorithm to test copositivity.
\newblock {\em Journal of Global Optimization}, 52(3):537--551, 2012.


\bibitem{Swg05}
M.~Schweighofer.
\newblock Optimization of polynomials on compact semialgebraic sets.
\newblock {\em SIAM J. Optim.},  No. 3(2005), 805-825.


\bibitem{SpoDur14}
J.~Sponsel and M.~D\"{u}r.
\newblock Factorization and cutting planes for completely positive matrices by
copositive projection.
\newblock {\em  Math. Program.} 143 (2014), 211–229.


\bibitem{sedumi}
J.~Sturm.
\newblock SeDuMi 1.02: A MATLAB toolbox for optimization over symmetric cones.
\newblock {\em Optim. Methods Softw.}, 11 \& 12 (1999), pp. 625--653.
\url{http://sedumi.ie.lehigh.edu}

%
%

%
%

%
%
%

\end{thebibliography}

\end{document}